\documentclass[11pt]{amsart}

\usepackage[margin=1.25in]{geometry}

\usepackage{graphicx}

\usepackage{amsmath, amssymb, amsthm, graphicx, enumerate, tikz, hyperref, float}
\usepackage{mathtools, comment}
\usetikzlibrary{matrix,arrows,decorations.pathmorphing}

\newtheorem{theorem}{Theorem}[section]
\newtheorem{lemma}[theorem]{Lemma}
\newtheorem{corollary}[theorem]{Corollary}
\newtheorem{proposition}[theorem]{Proposition}
\newtheorem*{mainresult}{Main Theorem}

\theoremstyle{definition}
\newtheorem{definition}[theorem]{Definition}

\newtheorem*{solution*}{Solution}

\theoremstyle{remark}

\newtheorem*{pf*}{Pf}
\newtheorem*{pfbase*}{Proof of Base Case}
\newtheorem*{pfstep*}{Proof of Inductive Step}

\numberwithin{equation}{section}

\begin{document}

\allowdisplaybreaks

\title[Classifying Families of Character Degree Graphs of Solvable Groups]{Classifying Families of Character Degree Graphs of Solvable Groups}

\author{Mark W. Bissler}
\address{Department of General Education - College of Business, Western Governors University, Salt Lake City, Utah 84107}
\email{mark.bissler@wgu.edu}

\author{Jacob Laubacher}
\address{Department of Mathematics, St. Norbert College, De Pere, Wisconsin 54115}
\email{jacob.laubacher@snc.edu}

\subjclass[2010]{Primary 20C15; Secondary 20D10, 05C75}

\date{\today}

\keywords{character degree graphs, solvable groups}

\begin{abstract}
We investigate prime character degree graphs of solvable groups. In particular, we consider a family of graphs $\Gamma_{k,t}$ constructed by adjoining edges between two complete graphs in a one-to-one fashion. In this paper we determine completely which graphs $\Gamma_{k,t}$ occur as the prime character degree graph of a solvable group.
\end{abstract}

\maketitle

\section*{Introduction}

We note that throughout this paper, $G$ will be a finite solvable group. We denote Irr$(G)$ for the set of irreducible characters of $G$, and cd$(G)=\{\chi(1)\mid \chi\in\mbox{Irr}(G)\}$. Write $\rho(G)$ to be the set of primes that divide degrees in cd$(G)$. When working solely with graphs $\Gamma$ (and not necessarily degree graphs of a finite solvable group $G$), we will also use the notation $\rho(\Gamma)$ to signify the vertex set. The degree graph of $G$, written as $\Delta(G)$, is the graph whose vertex set is $\rho(G)$. Two vertices $p$ and $q$ of $\rho(G)$ are adjacent in $\Delta(G)$ if there exists $a\in\mbox{cd}(G)$ where $pq$ divides $a$. We identify each vertex of a graph with a prime in $\rho(G)$. Throughout this paper, for simplicity, when labeling a vertex we associate that label also with a prime $p\in\rho(G)$. Character degree graphs have been studied in a variety of places; for example, see \cite{H}, \cite{L}, \cite{Lewis}, \cite{L3}, \cite{Palfy}, and \cite{Sass}. We expand upon the work done in \cite{BL}.

Families of graphs have been considered and studied by way of direct products for some time. However, showing that a family of graphs cannot occur as the prime character degree graph of any solvable group was done recently in \cite{BL}. One of the main tools in that paper was to classify vertices as admissible. In particular, it was shown that the graph cannot occur when every vertex is admissible. Many arguments in this paper rely on this result.

In this paper, we construct a family of a graphs, denoted $\Gamma_{k,t}$, and inquire for which values of $k$ and $t$ does $\Gamma_{k,t}$ occur as the prime character degree graph of some solvable group. We construct $\Gamma_{k,t}$ by taking two complete graphs (uniquely determined by the number of vertices, of size $k$ and $t$, respectively), and we place an edge between the two graphs injectively. That is, we attach edges uniquely from one complete graph to the other in a one-to-one fashion. The construction clearly gives a graph that satisfies P\'alfy's condition and is at most diameter two. However, when determining if the graph can or cannot occur as the prime character degree graph of a solvable group, many of our arguments rely on facts about graphs that are diameter three. This has been studied more extensively in \cite{L} and \cite{Sass}, and their techniques and results will play an important role.

For $t\geq1$, we handle the cases separately for $k=1$ (Proposition \ref{early}), $k=2$ (Proposition \ref{bissresult}), $k=3$ (Proposition \ref{main}), and $k\geq4$ (Proposition \ref{huge}). We can sum up their parts in the result below:

\begin{mainresult}
The graph $\Gamma_{k,t}$ occurs as the prime character degree graph of a solvable group precisely when $k=1$, or $t=1$, or $k=t=2$.
\end{mainresult}

\section{Preliminaries}

Here we present some classic and more recent results that are immediately related to prime character degree graphs of solvable groups. Our aim is to keep this paper as self-contained as possible, and one can see \cite{I} or \cite{L3} for more reading. Most of the results from this section, however, can be found in \cite{BL}, \cite{Palfy}, or \cite{Sass}.

\begin{lemma}[P\'alfy's condition]\label{PC}\emph{(\cite{Palfy})}
Let $G$ be a solvable group and let $\pi$ be a set of primes contained in $\Delta(G)$. If $|\pi|=3$, then there exists an irreducible character of $G$ with degree divisible by at least two primes from $\pi$. (In other words, any three vertices of the prime character degree graph of a solvable group span at least one edge.)
\end{lemma}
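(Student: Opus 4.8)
The plan is to argue by contradiction, so suppose $\pi=\{p,q,r\}\subseteq\rho(G)$ but no irreducible character degree is divisible by two members of $\pi$; equivalently, $p,q,r$ are pairwise nonadjacent in $\Delta(G)$. Following the strategy of P\'alfy \cite{Palfy}, I would take $G$ to be a counterexample of minimal order and exploit the interplay between the character degrees of $G$, of its quotients, and of its normal sections via Clifford theory. The first reduction is to normal subgroups: for $N\trianglelefteq G$, the degrees of $G/N$ inflate to degrees of $G$, and every $\chi\in\mathrm{Irr}(G)$ has degree $\chi(1)=e\,t\,\theta(1)$ for some $\theta\in\mathrm{Irr}(N)$ lying under $\chi$, where $t=[G:I_G(\theta)]$ is the orbit length and $e$ the degree of the associated character of the stabilizer. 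Minimality then forces that no proper quotient or section retains all three primes, which sharply restricts where in the normal structure each of $p,q,r$ can contribute a prime to a degree.

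Next I would bring in the structure theory of solvable groups. Since $G$ is solvable, the Fitting subgroup $F=F(G)$ is self-centralizing, $C_G(F)\le F$, so $G/F$ acts faithfully on the abelian chief factors inside $F$. Using the Ito--Michler theorem — a prime $s$ divides no degree in $\mathrm{cd}(G)$ precisely when $G$ has a normal abelian Sylow $s$-subgroup — one sees that each of $p,q,r$ must act nontrivially on, or fail to centralize, some chief factor. Tracking these actions through a chief series and repeatedly applying the Clifford-theoretic reduction of the previous step, I expect to collapse $G$ to an essentially minimal configuration: a semidirect product $V\rtimes H$ in which $V$ is an elementary abelian chief factor, $H$ is solvable acting coprimely on $V$, and the three primes are distributed among $|V|$, $|H|$, and the orbit data of $H$ on the dual group $\mathrm{Irr}(V)$.

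The technical heart of the argument, and the step I expect to be the main obstacle, is the orbit analysis on $\mathrm{Irr}(V)$. The degrees of the characters of $V\rtimes H$ lying over a nonprincipal $\lambda\in\mathrm{Irr}(V)$ are governed by the orbit length $[H:I_H(\lambda)]$ together with the degrees of the stabilizer $I_H(\lambda)$; to force an edge one must produce a \emph{single} such degree divisible by two of $p,q,r$. This is exactly where solvability is indispensable: coprime linear actions of solvable groups admit strong orbit-size control (regular or large orbits, and the constraint that a solvable subgroup of $\mathrm{GL}(V)$ cannot act too transitively), and these constraints are what prevent the three prime divisors from being cleanly separated into three pairwise-coprime degrees. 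Establishing the combinatorial rigidity of the admissible orbit lengths against such a three-way separation is the crux; once the separation is shown impossible, the sought-after character of degree divisible by two primes emerges, contradicting the standing assumption and closing the induction. I would emphasize that this solvability-dependent orbit control has no analogue for arbitrary finite groups, which is consistent with the statement failing without the solvability hypothesis.
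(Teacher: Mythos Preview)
The paper does not prove this lemma at all: it is stated as P\'alfy's condition with a citation to \cite{Palfy} and no argument is given, since the authors are simply quoting a known result to use as a tool. So there is no ``paper's own proof'' against which to compare your proposal.

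That said, your sketch is a reasonable outline of the strategy behind the original result---minimal counterexample, Clifford-theoretic reduction, Fitting subgroup and action on chief factors, and an orbit analysis on $\mathrm{Irr}(V)$ for a suitable module $V$. If you intend this as a replacement for the bare citation, be aware that the final ``orbit analysis'' paragraph is doing almost all of the work and is currently only a statement of intent rather than an argument: you would need to actually carry out the case analysis (or invoke a specific regular-orbit or orbit-size theorem) to close the contradiction. For the purposes of this paper, however, a citation suffices, and that is exactly what the authors do.
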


\begin{definition}(\cite{BL})
A vertex $p$ of a graph $\Gamma$ is \textbf{admissible} if:
\begin{enumerate}[(i)]
    \item the subgraph of $\Gamma$  obtained by removing $p$ and all edges incident to $p$ does not occur as the prime character degree graph of any solvable group, and
    \item none of the subgraphs of $\Gamma$ obtained by removing one or more of the edges incident to $p$ occur as the prime character degree graph of any solvable group.
\end{enumerate}
\end{definition}

\begin{lemma}\label{mark}\emph{(\cite{BL})}
If $\Gamma$ is a graph in which every vertex is admissible, then $\Gamma$ is not the prime character degree graph of any solvable group.
\end{lemma}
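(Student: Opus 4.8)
The plan is to argue by contradiction via a minimal counterexample, exploiting the interplay between the degree graph of $G$ and those of its quotients. Suppose the lemma fails, and among all solvable groups $G$ for which every vertex of $\Delta(G)$ is admissible, choose one with $|G|$ minimal; write $\Gamma = \Delta(G)$. The goal is then to exhibit a smaller solvable group realizing one of the subgraphs that the admissibility of some vertex forbids, producing a contradiction.

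First I would record the basic compatibility between quotients and degree graphs: if $N \trianglelefteq G$, then $\mathrm{cd}(G/N) \subseteq \mathrm{cd}(G)$, so $\rho(G/N) \subseteq \rho(\Gamma)$ and every edge of $\Delta(G/N)$ is an edge of $\Gamma$. Thus each $\Delta(G/N)$ is a subgraph of $\Gamma$ obtained by deleting some vertices together with some edges. I would then pass to a minimal normal subgroup $N$ of $G$; since $G$ is solvable, $N$ is an elementary abelian $p$-group for a single prime $p$. By minimality of $|G|$ the group $G/N$ cannot realize $\Gamma$ itself (otherwise $G/N$ would be a smaller group realizing the same $\Gamma$), so $\Delta(G/N)$ is a proper subgraph of $\Gamma$: either the vertex $p$ has disappeared (when $p \notin \rho(G/N)$) or at least one edge has been destroyed, or both.

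The heart of the argument is to show that the passage from $\Gamma$ to $\Delta(G/N)$ can be arranged to be one of the atomic operations named in the definition of admissibility: deletion of the single vertex $p$ together with its incident edges, or deletion of a set of edges all incident to one vertex. If $p \notin \rho(G/N)$ and no other edges are lost, then $\Delta(G/N)$ is exactly $\Gamma$ with $p$ removed, contradicting condition (i) of the admissibility of $p$; if $p \in \rho(G/N)$ but some of its incident edges vanish, one obtains an edge-deletion at $p$, contradicting condition (ii). I would control this with Clifford theory applied to the characters of $G$ lying over $N$: any vertex or edge present in $\Gamma$ but absent in $\Delta(G/N)$ must be supported solely on degrees of irreducible characters that are nontrivial on $N$, and I would analyze which products of primes such degrees can carry, using the minimality of $|G|$ to forbid descending to a still smaller quotient.

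The step I expect to be the main obstacle is precisely this control. In general $\Delta(G/N)$ could lose an edge between two vertices both different from $p$, and such a compound deletion is not one of the atomic operations that admissibility forbids, so it would not by itself yield a contradiction. Ruling this out — or rerouting the argument through a different minimal normal subgroup, a different prime, or a suitable subgroup for which the lost structure is confined to a single vertex — is the crux. I would handle it by a case analysis on the position of $p$ relative to the vanishing edges, invoking P\'alfy's condition (Lemma \ref{PC}) to restrict the configurations of three mutually relevant primes, and using the freedom to choose which vertex we prove non-admissible rather than fixing it in advance.
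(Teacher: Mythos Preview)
The paper does not prove this lemma; it is quoted from \cite{BL}, so there is no in-paper argument to compare against. Assessing your proposal on its own merits: the overall architecture (minimal counterexample, pass to $G/N$ for a minimal normal elementary abelian $p$-subgroup $N$, and try to recognise $\Delta(G/N)$ as one of the subgraphs excluded by admissibility of some vertex) is the natural starting point, and you have correctly located the decisive difficulty. But the resolution you sketch does not close it.

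P\'alfy's condition constrains which triples of vertices of $\Delta(G)$ span an edge; it says nothing about which edges survive passage to a quotient, so it gives no leverage on whether an edge $\{q,r\}$ with $q,r\neq p$ persists in $\Delta(G/N)$. The ``freedom to choose which vertex we prove non-admissible'' helps only when all lost edges share a common endpoint, and nothing in your outline forces this: if $\Delta(G/N)$ is $\Gamma$ minus two edges with four distinct endpoints, then conditions (i) and (ii) at any single vertex do not exclude that subgraph. Rerouting through a different minimal normal subgroup is also not available in general, since a minimal counterexample may well have a unique one. So the step you yourself flag as ``the main obstacle'' remains open in your outline; Clifford theory over a single minimal normal subgroup together with P\'alfy's condition is not enough to confine the damage to a single vertex. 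The argument in \cite{BL} requires additional structural input about a minimal counterexample (control of the Fitting subgroup and of normal Sylow subgroups, in the spirit of Lemma~\ref{lewisgraph}) to pin down exactly what can be lost in a well-chosen quotient, and that ingredient is missing here.
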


Taking direct products is a method to generate graphs from smaller ones. We present the detailed construction below.

\begin{definition}
A graph $\Gamma$ is a \textbf{direct product} if it can be constructed in the following way:

We start with graphs $A$ and $B$. We have that $\rho(\Gamma)=\rho(A)\cup\rho(B)$, where $\rho(A)$ and $\rho(B)$ are disjoint sets. There is an edge between vertices $p$ and $q$ in $\Gamma$ if any of the following are satisfied:
\begin{enumerate}[(i)]
    \item $p,q\in\rho(A)$ and there is an edge between $p$ and $q$ in $A$,
    \item $p,q\in\rho(B)$ and there is an edge between $p$ and $q$ in $B$, or
    \item $p\in\rho(A)$ and $q\in\rho(B)$.
\end{enumerate}
\end{definition}

\begin{lemma}\label{complete}
Complete graphs occur as $\Delta(G)$ for some solvable group $G$.
\end{lemma}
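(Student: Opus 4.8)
The plan is to show that every complete graph on $n$ vertices arises as $\Delta(G)$ for some solvable $G$, and the most natural route is an explicit construction. First I would recall the standard dictionary: if $p_1, \dots, p_n$ are the primes we wish to use as vertices, it suffices to produce a solvable group $G$ with $\rho(G) = \{p_1, \dots, p_n\}$ in which some single irreducible character degree is divisible by the full product $p_1 p_2 \cdots p_n$. Indeed, if one degree $a \in \mathrm{cd}(G)$ is divisible by all of the $p_i$, then for every pair $p_i, p_j$ that same $a$ witnesses an edge, so $\Delta(G)$ is complete; and conversely we just need each $p_i$ to actually divide some degree so that it appears as a vertex, which the single degree $a$ already guarantees.

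The cleanest construction I would reach for is an abelian group acted on by a cyclic group in a Frobenius-like fashion. Concretely, take $G$ to be a semidirect product $V \rtimes C$, where $C$ is cyclic of order $m = p_1 p_2 \cdots p_n$ and $V$ is an elementary abelian $q$-group for a prime $q$ chosen so that $C$ acts fixed-point-freely on $V$ (for instance, realize $V$ as the additive group of a finite field $\mathbb{F}_{q^d}$ on which a generator of $C$ acts by multiplication by an element of order $m$, which exists once $m \mid q^d - 1$). Such a $G$ is a Frobenius group with kernel $V$ and complement $C$, hence solvable, and its nontrivial irreducible characters not containing $V$ in their kernel all have degree equal to $|C| = m$ by Frobenius-group character theory. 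Thus $m = p_1 \cdots p_n \in \mathrm{cd}(G)$, every $p_i$ divides this degree, $\rho(G) = \{p_1, \dots, p_n\}$, and $\Delta(G)$ is the complete graph on these vertices.

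An alternative I would keep in mind, in case one prefers to avoid the Frobenius machinery, is to build the complete graph on $n$ vertices as an iterated direct product of $n$ single-vertex graphs: a single vertex $p_i$ is realized by any group with a degree divisible by $p_i$ (e.g.\ a nonabelian group of order $p_i q_i$), and by the Definition of direct product, combining $n$ such one-vertex graphs places an edge between every pair coming from distinct factors, yielding the complete graph. This requires knowing that the direct-product construction on graphs is actually realized by the direct product of the underlying groups at the level of degree graphs, which is the standard fact that $\mathrm{cd}(G_1 \times G_2) = \{a_1 a_2 \mid a_i \in \mathrm{cd}(G_i)\}$.

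The main obstacle, and the only genuinely nonroutine point, is arranging that all the chosen primes divide a single (or appropriately combined) character degree while keeping the group solvable; once the right $G$ is exhibited, checking that $\Delta(G)$ is complete is immediate. I expect the authors to simply cite a known source (such as \cite{Palfy} or \cite{L3}) for the fact that complete graphs occur, since this is classical, but if a self-contained argument is wanted the Frobenius construction above is the shortest complete proof.
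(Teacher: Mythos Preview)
The paper states Lemma~\ref{complete} without proof and without even a citation; it is treated as folklore. Your proposal therefore supplies strictly more than the paper does, and your Frobenius construction is correct: with $V$ elementary abelian of prime order coprime to $m=p_1\cdots p_n$ and $C$ cyclic of order $m$ acting fixed-point-freely, one has $\mathrm{cd}(G)=\{1,m\}$, so $\rho(G)=\{p_1,\dots,p_n\}$ and $\Delta(G)$ is complete on those vertices. Your closing prediction that the authors would simply cite the fact is essentially what happens, except they do not even cite.

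One small wrinkle in your alternative route: a nonabelian group of order $p_iq_i$ with $p_i>q_i$ has character degrees $\{1,q_i\}$, so its degree graph has the single vertex $q_i$, not $p_i$. To realize the vertex $p_i$ this way you must choose $q_i>p_i$ with $p_i\mid q_i-1$ (which exists by Dirichlet), or instead use an extraspecial $p_i$-group. With that adjustment the direct-product argument goes through exactly as you describe.
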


The following two results are due to Lewis. They will aid in constructions throughout this paper.

\begin{lemma}\label{crazy}\emph{(\cite{L3})}
For every positive integer $N$, there is a solvable group $G$ so that $\Delta(G)$ has two complete, connected components: one having an isolated vertex, and the other having $N$ vertices.
\end{lemma}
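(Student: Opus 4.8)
The plan is to realize the required graph as $\Delta(G)$ for a suitable two-step Frobenius (``$2$-Frobenius'') group $G$, generalizing the prototype $G = S_4$, whose degrees $\mathrm{cd}(S_4) = \{1,2,3\}$ already produce two isolated vertices and settle the case $N=1$. Concretely, I would build a solvable group $G$ with a normal series $1 < F < L < G$ for which $\mathrm{cd}(G) = \{1, r, m\}$, where $r$ is a single prime and $m = p_1 p_2 \cdots p_N$ is squarefree with exactly $N$ distinct prime divisors and $\gcd(r,m)=1$. Once this degree set is secured the graph is immediate: the degree $m$ forces every pair $p_i, p_j$ to be adjacent, so $\{p_1,\dots,p_N\}$ spans a complete component, while $r$, being coprime to $m$, is joined to nothing and is the isolated vertex. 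These are precisely the two complete components demanded.

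First I would fix the arithmetic. Choose any prime $r$, and by Dirichlet's theorem select distinct primes $p_1,\dots,p_N$ with $p_i \equiv 1 \pmod r$; set $m = \prod_i p_i$. The congruences guarantee a cyclic group $\langle y\rangle$ of order $r$ acting fixed-point-freely on a cyclic group $C$ of order $m$ (act by a unit of order $r$ on each $\mathbb{Z}_{p_i}$). Next choose a prime $s$ coprime to $rm$ and an elementary abelian $s$-group $F$ carrying a fixed-point-free action of $C$, so that $L = F \rtimes C$ is a Frobenius group with abelian kernel and complement; then $\mathrm{cd}(L) = \{1, m\}$ and its degree graph is the complete graph on $\{p_1,\dots,p_N\}$. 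Finally I would extend $\langle y\rangle$ to act on $F$ compatibly with its action on $C$, producing $G = F \rtimes (C \rtimes \langle y\rangle)$, so that $L/F \cong C$ is the kernel and $G/L \cong \langle y\rangle$ the complement of the Frobenius group $G/F$.

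With $G$ in hand the degrees split into two families. The irreducible characters trivial on $F$ are exactly those of the Frobenius group $G/F = C \rtimes \langle y \rangle$, which has abelian kernel and abelian complement, so they contribute $\mathrm{cd}(G/F) = \{1, r\}$. For a nonprincipal $\lambda \in \mathrm{Irr}(F)$, the (coprime) fixed-point-free action of $C$ on $\mathrm{Irr}(F)\setminus\{1_F\}$ forces $I_L(\lambda) = F$, whence $I_G(\lambda)/F$ embeds in $G/L \cong \mathbb{Z}_r$; when each $C$-orbit on $\mathrm{Irr}(F)\setminus\{1_F\}$ is $\langle y\rangle$-stable the $G/F$-orbit of $\lambda$ coincides with its $C$-orbit of size $m$, so $I_G(\lambda)$ has order $|F|r$, $\lambda$ extends across the cyclic quotient, and every character induced from $\lambda$ has degree $|G:I_G(\lambda)| = m$. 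Hence $\mathrm{cd}(G) = \{1, r, m\}$, giving $\Delta(G)$ the desired two complete components.

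The main obstacle is exactly this last verification: ruling out the ``mixed'' degree $mr$, which would appear if some $\lambda$ had inertia group $F$ and induced irreducibly, thereby joining $r$ to the primes $p_i$ and destroying the disconnection. Controlling this amounts to arranging $F$ so that every $C$-orbit on $\mathrm{Irr}(F)\setminus\{1_F\}$ is stabilized setwise by $\langle y\rangle$, which is where the extension of the action of $\langle y\rangle$ from $C$ to $F$ must be chosen with care, matching the fixed-point-free automorphism of $C$ to a Galois (Frobenius) action on the module $F$. This compatibility is the genuinely delicate, number-theoretic heart of the construction; the remaining steps are routine once it is in place.
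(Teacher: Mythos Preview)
The paper does not supply a proof of this lemma at all; it is quoted as a known result from Lewis's survey \cite{L3}. So there is no in-paper argument to compare your proposal against, and your sketch is being assessed on its own merits.

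Your $2$-Frobenius strategy is the standard route to disconnected degree graphs of this shape, and you have correctly identified both the target degree set $\{1,r,m\}$ and the decisive obstruction: if some nontrivial $\lambda\in\mathrm{Irr}(F)$ has trivial inertia in $G$, the degree $rm$ appears and joins the two components. Where the proposal falls short is exactly where you say it does. The condition ``every $C$-orbit on $\mathrm{Irr}(F)\setminus\{1_F\}$ is $\langle y\rangle$-stable'' is not a mild compatibility requirement that can be met by choosing $F$ after the fact; with your order of choices (fix $r$, then pick $p_1,\dots,p_N\equiv 1\pmod r$ and $m=\prod p_i$, then look for $s$ and $F$) it is genuinely unclear that such a module exists. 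For instance, if $F$ is one-dimensional over $\mathbb{F}_s$ then $y$ acts by a scalar and cannot satisfy $ycy^{-1}=c^{a}$ with $a\not\equiv 1$, while in the natural semilinear model $F=\mathbb{F}_{s^{r}}$ with $y$ acting as Frobenius, the orbit-stability condition forces $m=(s^{r}-1)/(s-1)$, which is a constraint on $m$, not a freedom. The constructions in the literature therefore reverse the logic: one first fixes $r$ and a prime $s$, takes $G$ inside the semilinear group $\Gamma(s^{r})$, and then chooses $s$ so that $(s^{r}-1)/(s-1)$ has the required number of prime divisors coprime to $r$. Your outline would become a proof once you either carry out such an explicit construction or invoke the relevant existence result; as written, the ``delicate, number-theoretic heart'' is acknowledged but not supplied.
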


\begin{lemma}\label{lewisgraph}\emph{(\cite{L})}
Let $G$ be a solvable group and let $p\in\rho(G)$. If $P$ is a normal Sylow $p$-subgroup of $G$, then $\rho(G/P')=\rho(G)\setminus\{p\}$.
\end{lemma}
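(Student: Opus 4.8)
The plan is to pass to $\bar G := G/P'$ and read off its degrees directly. Note first that $P' = [P,P]$ is characteristic in the normal subgroup $P$, hence $P' \trianglelefteq G$ and the quotient is legitimate; moreover $\bar P := P/P'$ is an \emph{abelian} normal Sylow $p$-subgroup of $\bar G$. I would prove the two inclusions $\rho(\bar G) \subseteq \rho(G)\setminus\{p\}$ and $\rho(G)\setminus\{p\} \subseteq \rho(\bar G)$ separately, the first being routine and the second carrying all the weight.

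For the easy inclusion, It\^o's theorem applied to the abelian normal subgroup $\bar P \trianglelefteq \bar G$ shows that $\psi(1) \mid [\bar G : \bar P] = [G:P]$ for every $\psi \in \mathrm{Irr}(\bar G)$; since $p \nmid [G:P]$, no degree of $\bar G$ is divisible by $p$, so $p \notin \rho(\bar G)$. As every irreducible character of $G/P'$ inflates to one of $G$ of the same degree, $\mathrm{cd}(\bar G) \subseteq \mathrm{cd}(G)$ and hence $\rho(\bar G) \subseteq \rho(G)$; together these give $\rho(\bar G) \subseteq \rho(G)\setminus\{p\}$.

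The real work is the reverse inclusion: given a prime $q \neq p$ with $q \in \rho(G)$, I must produce $\psi \in \mathrm{Irr}(\bar G)$ with $q \mid \psi(1)$. Fixing $\chi \in \mathrm{Irr}(G)$ with $q \mid \chi(1)$ and a constituent $\theta$ of $\chi_P$ with inertia group $T = I_G(\theta) \supseteq P$, Clifford theory gives $\chi(1) = [G:T]\,e\,\theta(1)$ with $[G:T] \mid [G:P]$ prime to $p$ and $\theta(1)$ a power of $p$; since $q \neq p$, the divisor $q$ must sit in the prime-to-$p$ orbit-and-ramification data $[G:T]\,e$. The goal is to relocate this $q$-divisibility onto a character whose $P$-constituents are \emph{linear}, i.e.\ onto a character of $\bar G = G/P'$. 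I would carry this out by inducting on $|P'|$, collapsing one minimal normal subgroup $N \leq P'$ of $G$ at a time and showing each collapse preserves divisibility by $q$; the case in which $\theta$ is already linear is immediate, and the induction is arranged to reduce the non-linear case to it.

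The main obstacle is exactly this last point. A given $\chi$ with $q \mid \chi(1)$ need not factor through $G/P'$ at all, since its $P$-constituents may be non-linear, so one cannot simply reduce $\chi$ modulo $P'$; nor can one retreat to a $p$-complement $H$, since $q$ may fail to lie in $\rho(H)$ even when it lies in $\rho(G)$ --- the group $\mathrm{SL}(2,3)$, with $G/P' \cong A_4$ and $\rho(H) = \rho(C_3) = \emptyset$, is the cautionary example. Verifying that the $q$-part is nevertheless realized by some character of $G/P'$, through the inertia and orbit data on $\mathrm{Irr}(P/P')$, is the delicate step, and is the content of Lewis's argument in \cite{L}.
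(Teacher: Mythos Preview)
The paper does not prove this lemma; it is quoted from \cite{L} without argument, so there is no proof in the paper to compare your proposal against.

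Judged on its own: your easy inclusion $\rho(G/P') \subseteq \rho(G)\setminus\{p\}$ via It\^o's theorem is complete and correct. For the reverse inclusion you set up the Clifford decomposition correctly and isolate the real difficulty --- that the $q$-divisibility sitting in the factor $[G:T]\,e$ must be realized by some character lying over a \emph{linear} constituent of $P$ --- but the induction you announce is not actually carried out. Collapsing a minimal normal $N \le P'$ and invoking the inductive hypothesis on $G/N$ reduces the problem to showing that $q \in \rho(G/N)$ whenever $q \in \rho(G)\setminus\{p\}$; this is not visibly easier than the original statement, and you do not explain why ``each collapse preserves divisibility by $q$.'' Indeed, in your final paragraph you explicitly defer this very step back to \cite{L}. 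So as a standalone proof the proposal is incomplete: the framework is sound and the obstacle is correctly identified, but the step you flag as ``delicate'' is the entire content of the lemma, and it remains unestablished here just as in the paper itself.
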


The above Lemma \ref{lewisgraph} implies the subgraph $\Delta(G/P')$ is a subgraph of $\Delta(G)$ without the vertex $p$, all edges incident to $p$, and possibly without edges adjacent to vertices adjacent to $p$.

The following was presented in \cite{Sass}. Suppose $\Gamma$ is a graph of diameter three. We can partition $\rho(\Gamma)$ into four nonempty disjoint sets: $\rho(\Gamma)=\rho_1\cup\rho_2\cup\rho_3\cup\rho_4$. One can do this in the following way: find vertices $p$ and $q$ where the distance between them is three. Let $\rho_4$ be the set of all vertices that are distance three from the vertex $p$, which will include the vertex $q$. Let $\rho_3$ be the set of all vertices that are distance two from the vertex $p$. Let $\rho_2$ be the set of all vertices that are adjacent to the vertex $p$ and some vertex in $\rho_3$. Finally, let $\rho_1$ consist of $p$ and all vertices adjacent to $p$ that are not adjacent to anything in $\rho_3$. This notation is not unique, and depends on the vertices $p$ and $q$. Using this notation, we have the following result.

\begin{proposition}\label{sassresult}\emph{(\cite{Sass})}
Let $G$ be a solvable group where $\Delta(G)$ has diameter three. One then has the following:
\begin{enumerate}[(i)]
    \item\label{sass1} $|\rho_3|\geq3$,
    \item\label{sass2} $|\rho_1\cup\rho_2|\leq|\rho_3\cup\rho_4|$,
    \item\label{sass3} if $|\rho_1\cup\rho_2|=n$, then $|\rho_3\cup\rho_4|\geq2^n$, and
    \item\label{sass4} $G$ has a normal Sylow $p$-subgroup for exactly one prime $p\in\rho_3$.
\end{enumerate}
\end{proposition}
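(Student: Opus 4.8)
The plan is to derive all four assertions from the structure theory of solvable groups whose prime character degree graph is \emph{disconnected}, since that theory is the natural source of the exponential bound in statement \ref{sass3}. First I would fix vertices $p$ and $q$ realizing the diameter (distance three) and record the adjacency constraints forced by the partition. By construction no vertex of $\rho_1$ is adjacent to $\rho_3$; moreover, if a neighbor $v$ of $p$ (so $v\in\rho_1\cup\rho_2$) were adjacent to a vertex $w\in\rho_4$, then the path $p-v-w$ would place $w$ at distance at most two from $p$, a contradiction. Hence every edge crossing from the $p$-side $\rho_1\cup\rho_2$ to the $q$-side $\rho_3\cup\rho_4$ lies between $\rho_2$ and $\rho_3$, and each of the four sets is nonempty because a geodesic $p-v_1-v_2-q$ meets all of them.

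The central step is to produce, from $G$, a section whose degree graph is disconnected and whose two components are housed in the two sides. I would do this by peeling off normal subgroups and passing to quotients in the spirit of Lemma~\ref{lewisgraph}, arranging that the bridging primes in $\rho_2\cup\rho_3$ are removed so that the $p$-side and $q$-side can no longer communicate. Once a disconnected section is in hand, I would invoke the classification of solvable groups with disconnected degree graph: there are exactly two connected components, one of which is complete, one of which is tied to a normal subgroup, and if the components have sizes $a\le b$ then $b$ is exponentially large in $a$. Transporting this back to $\Delta(G)$ yields the size comparison in statement \ref{sass2} and, after accounting for the bridge vertices, the sharper estimate \ref{sass3} with $n=\abs{\rho_1\cup\rho_2}$; note \ref{sass2} is then automatic since $2^n\ge n$.

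For statement \ref{sass1} I would show $\abs{\rho_3}\ge 3$ by exploiting that $\rho_3$ is the sole interface through which the $q$-side reconnects toward $p$: if $\abs{\rho_3}\le 2$, I would aim for a contradiction either with P\'alfy's condition (Lemma~\ref{PC}) applied to a triple drawn from $\rho_3$ together with a vertex of $\rho_4$, or with the exponential bound, which for $\rho_3$ too small leaves insufficiently many primes on the large side to realize all the required nonadjacencies. For statement \ref{sass4}, the normal Sylow subgroup is the group-theoretic shadow of the distinguished complete component supplied by the disconnected-graph theorem: that theorem furnishes a prime with normal Sylow subgroup, and tracing the section construction shows this prime must lie in $\rho_3$. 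Uniqueness would follow because two distinct primes in $\rho_3$ with normal Sylow subgroups would make the relevant subgroups jointly normal and force adjacencies that shorten the $p$-to-$q$ distance below three.

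The hard part will be the middle step: constructing the disconnecting section so that it faithfully records the four-part partition, and then running the coprime-action counting underlying the exponential estimate. Concretely, the small side must act on a module built from the large side, and extracting $n$ independent nonadjacencies forces that module to split into enough coprime pieces to account for $2^n$ primes. This Fitting-subgroup action analysis, together with the bookkeeping that separates $\rho_3$ from $\rho_4$ and pins down the unique normal Sylow subgroup, is where essentially all of the difficulty lies.
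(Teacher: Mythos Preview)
The paper does not prove this proposition at all: it is quoted with attribution to Sass \cite{Sass} and used purely as a black box in the arguments of Propositions~\ref{main} and~\ref{huge}. There is therefore no proof in the present paper against which your proposal can be compared.

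As a remark on the sketch itself: the overall shape is right (the arguments in \cite{Sass} and \cite{L} do ultimately pass to a section with disconnected degree graph and invoke the structure theory of \cite{L2}, which is the source of the bound $2^n$), but the plan as written is circular at its key step. You propose to use Lemma~\ref{lewisgraph} to peel off bridging primes and thereby manufacture a disconnected section, yet Lemma~\ref{lewisgraph} requires a normal Sylow $p$-subgroup as input, and the existence of such a subgroup for a prime in $\rho_3$ is exactly assertion~\ref{sass4}, which you intend to deduce afterward. In the actual development the logical order is reversed: one first proves the existence of the unique normal Sylow $p$-subgroup with $p\in\rho_3$ (this is the substantial step, going back to \cite{L}), and only then can one strip vertices via Lemma~\ref{lewisgraph} to reach a disconnected graph and read off the inequalities. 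Your proposed route to~\ref{sass1} via P\'alfy's condition also does not work as stated, since vertices of $\rho_3\cup\rho_4$ may well be pairwise adjacent, so a triple drawn from that side need not violate Lemma~\ref{PC}.
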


Next we state the main theorem from \cite{BL}. It gives rise to two families of graphs that satisfy P\'alfy's condition. We will concern ourselves with one of them, which is our motivation of this paper.

\begin{theorem}\label{markmain}\emph{(\cite{BL})}
Let $\Gamma$ be a graph satisfying P\'alfy's condition with $k\geq5$ vertices. Assume that there exists two vertices $p_1$ and $p_2$ in $\Gamma$ such that
\begin{enumerate}[(i)]
    \item both $p_1$ and $p_2$ are of degree two,
    \item $p_1$ is adjacent to $p_2$, and
    \item $p_1$ and $p_2$ share no common neighbor.
\end{enumerate}
Then $\Gamma$ is not the prime character degree graph of any solvable group.
\end{theorem}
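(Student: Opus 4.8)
The plan is to assume $\Gamma=\Delta(G)$ for some solvable $G$, first use Lemma~\ref{PC} to pin down the shape of $\Gamma$ around $p_1$ and $p_2$, and then show that \emph{every} vertex of $\Gamma$ is admissible, so that Lemma~\ref{mark} produces the contradiction. Since $p_1$ and $p_2$ have degree two, are adjacent, and share no neighbour, I may write the neighbours of $p_1$ as $\{p_2,q_1\}$ and those of $p_2$ as $\{p_1,q_2\}$ with $q_1\neq q_2$. For any vertex $x\notin\{p_1,p_2,q_1,q_2\}$, applying P\'alfy's condition to $\{p_1,q_2,x\}$ forces $q_2$ adjacent to $x$ (since $p_1$ is adjacent to neither), and to $\{p_2,q_1,x\}$ forces $q_1$ adjacent to $x$; applying it to $\{p_1,x,y\}$ for two such vertices forces $x$ adjacent to $y$. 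Writing $R=\rho(\Gamma)\setminus\{p_1,p_2,q_1,q_2\}$, which is nonempty because $k\geq5$, this shows $R$ is a clique with every vertex joined to both $q_1$ and $q_2$, while $q_1$ and $q_2$ may or may not be adjacent. In particular $\Gamma$ itself has diameter two, so Proposition~\ref{sassresult} does not apply to $\Gamma$ directly; the point of passing to subgraphs is precisely to manufacture diameter-three configurations.

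I would then argue by strong induction on $k$, verifying admissibility one vertex at a time. The edge-deletion clause is the easy half: deleting any edge incident to a vertex $x\in R$ destroys the unique edge certifying P\'alfy's condition for some triple $\{p_1,\cdot,\cdot\}$ or $\{p_2,\cdot,\cdot\}$ (for instance, deleting $q_1x$ leaves $\{p_2,q_1,x\}$ edgeless), so each such subgraph violates Lemma~\ref{PC} and cannot be a degree graph. For the vertex-deletion clause, $\Gamma\setminus x$ with $x\in R$ is again a graph of the present form with one fewer vertex of $R$, hence is not a degree graph by the inductive hypothesis.

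The vertices $q_1,q_2,p_1,p_2$ are where Proposition~\ref{sassresult} earns its keep. Deleting $q_1$ leaves $p_1$ pendant at $p_2$, producing a connected diameter-three graph in which the only vertex at distance two from $p_1$ is $q_2$; were this a degree graph, part~\ref{sass1} would force $|\rho_3|\geq3$, a contradiction, and symmetrically for $q_2$. Deleting $p_1$ makes $p_2$ pendant at $q_2$: when $q_1$ and $q_2$ are nonadjacent this is diameter three with $\rho_3=R$, so part~\ref{sass1} again applies once $|R|\geq3$, the residual cases $|R|\in\{1,2\}$ being folded into the base of the induction. The edge-deletions at these vertices behave the same way, each either breaking P\'alfy's condition or yielding a diameter-three graph with $|\rho_3|\leq2$; the one genuinely different edge-deletion, removing $q_1q_2$, returns the ``$q_1\not\sim q_2$'' member of the same family, which I would dispose of first (for each fixed $k$) so that it is available here.

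The hard part will be the deletion of $p_1$ (or $p_2$) in the case that $q_1$ and $q_2$ are adjacent: then $\Gamma\setminus p_1$ is a complete graph on $\{q_1,q_2\}\cup R$ carrying a single pendant vertex $p_2$, which has diameter two and so is invisible to Proposition~\ref{sassresult}. Such a clique-with-a-pendant has no admissible vertex either, since deleting the pendant returns a complete graph, which is a degree graph by Lemma~\ref{complete} (and deleting the attachment returns a complete graph together with an isolated vertex, a degree graph by Lemma~\ref{crazy}); hence Lemma~\ref{mark} cannot be invoked, and excluding it demands a separate argument that no connected degree graph is a complete graph with one pendant edge attached. Establishing this sub-lemma, together with the explicit base cases $k\in\{5,6\}$ --- where $\Gamma$ is a five-cycle or a ``house'' and the leftover pieces to rule out are graphs such as $P_4$, $P_4\sqcup K_1$, and the paw --- is the technical heart of the proof; everything else reduces mechanically to Lemma~\ref{PC}, Proposition~\ref{sassresult}, and the inductive hypothesis, after which Lemma~\ref{mark} closes the argument.
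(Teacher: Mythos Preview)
First, note that the paper does not itself prove Theorem~\ref{markmain}; it is quoted from \cite{BL} as a preliminary result, so there is no in-paper argument to compare against. Your structural analysis of $\Gamma$ is correct and matches what the statement of the theorem forces: with $q_1\sim q_2$ one obtains exactly the family $\Gamma_{k,2}$ of the present paper (take $p_1=b_1$, $p_2=b_2$, $q_1=a_1$, $q_2=a_2$, $R=\{a_3,\dots,a_k\}$), and with $q_1\not\sim q_2$ one obtains the other family alluded to after Proposition~\ref{bissresult}.

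However, your strategy has a genuine gap that cannot be repaired. In the case $q_1\sim q_2$ you correctly identify $\Gamma\setminus p_1$ as a complete graph on $\{q_1,q_2\}\cup R$ carrying a single pendant vertex $p_2$, and you propose to finish by proving the ``sub-lemma'' that no clique-with-a-pendant is a degree graph. That sub-lemma is \emph{false}: a complete graph on $m$ vertices with one pendant attached is precisely $\Gamma_{m,1}$ in the notation of this paper, and Proposition~\ref{early} shows that $\Gamma_{m,1}$ occurs as $\Delta(G)$ for a solvable $G$ for every $m\ge 1$ (indeed the proof there exhibits it as a direct product). Hence condition~(i) of admissibility fails for $p_1$, and by symmetry for $p_2$, whenever $q_1\sim q_2$; Lemma~\ref{mark} therefore cannot be invoked on $\Gamma$, and your plan would at best handle only the $q_1\not\sim q_2$ family while leaving the $\Gamma_{k,2}$ family---the very case needed for Proposition~\ref{bissresult}---untouched. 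Whatever \cite{BL} does for these two non-admissible vertices, it must go beyond the vertex-by-vertex admissibility check you outline.
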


\section{Main Results}

Similar to a direct product, here we construct the graph $\Gamma_{k,t}$. This will yield a family of graphs that we investigate and classify in our Main Theorem. We say that a graph $\Gamma$ is in the family $\Gamma_{k,t}$ if it can be constructed in the following way:

We start with complete graphs $A$ and $B$. Let $A$ have $k$ vertices (arbitrarily labeled $a_1,a_2,\ldots,a_k$), and let $B$ have $t$ vertices (arbitrarily labeled $b_1,b_2,\ldots,b_t$). Without loss of generality, say that $k\geq t$.

Letting $\Gamma_{k,t}:=\Gamma$, we have that $\rho(\Gamma_{k,t})=\rho(A)\cup\rho(B)$, where $\rho(A)$ and $\rho(B)$ are disjoint sets. There is an edge between vertices $p$ and $q$ in $\Gamma_{k,t}$ if any of the following are satisfied:
\begin{enumerate}[(i)]
    \item $p,q\in\rho(A)$,
    \item $p,q\in\rho(B)$, or
    \item $p=a_i\in\rho(A)$ and $q=b_i\in\rho(B)$ for some $1\leq i\leq t$.
\end{enumerate}

Notice how this construction generates a graph that satisfies P\'alfy's condition and is at most diameter two. Moreover, the graphs are symmetric (that is, $\Gamma_{k,t}=\Gamma_{t,k}$). We now proceed towards proving the propositions needed for the Main Theorem.

\begin{proposition}\label{early}
Let $k\geq1$. The graph $\Gamma_{k,1}$ occurs as $\Delta(G)$ for some solvable group $G$.
\end{proposition}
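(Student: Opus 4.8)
The plan is to realize $\Gamma_{k,1}$ as a graph-theoretic direct product, using Lemma \ref{crazy} to supply one of the two factors. First I would record the structure of $\Gamma_{k,1}$: with $t=1$, condition (i) makes $a_1,\ldots,a_k$ span a complete graph $K_k$, condition (ii) contributes nothing (as $B$ has a single vertex), and condition (iii) yields only the edge $a_1b_1$. Thus $\Gamma_{k,1}$ is $K_k$ together with a single pendant vertex $b_1$ attached at $a_1$.

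I would dispose of the base case $k=1$ immediately: $\Gamma_{1,1}$ is just the edge $a_1b_1$, i.e.\ the complete graph $K_2$, which occurs as $\Delta(G)$ for some solvable $G$ by Lemma \ref{complete}. (This case must be separated out since the induction-free construction below invokes Lemma \ref{crazy} with $N=k-1$, and that lemma requires $N$ to be a positive integer.)

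For $k\geq 2$, the main step is to exhibit $\Gamma_{k,1}$ as the direct product of two graphs, each of which occurs as a degree graph. Take $A$ to be the single vertex $\{a_1\}$, which occurs as a degree graph (for instance as the complete graph $K_1$ via Lemma \ref{complete}). Take $B$ to be the disjoint union of an isolated vertex $b_1$ with the complete graph $K_{k-1}$ on $\{a_2,\ldots,a_k\}$; by Lemma \ref{crazy} applied with $N=k-1$, this graph occurs as $\Delta(H)$ for some solvable group $H$. Realizing $A$ and $B$ by solvable groups of coprime order and passing to their direct product, the degree graph of the product is the graph-theoretic direct product of $A$ and $B$: every vertex of $B$ becomes adjacent to $a_1$, while all adjacencies internal to $B$ are preserved.

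The decisive check, and the only real obstacle, is to verify that these adjacencies coincide exactly with those of $\Gamma_{k,1}$. Since $a_1$ is joined to each of $b_1,a_2,\ldots,a_k$ and the vertices $a_2,\ldots,a_k$ already form a complete graph inside $B$, the set $\{a_1,a_2,\ldots,a_k\}$ spans $K_k$; and since $b_1$ was isolated within $B$, its only neighbor in the product is $a_1$. This is precisely the graph $\Gamma_{k,1}$, so it occurs as $\Delta(G)$ for the chosen solvable group $G$. I anticipate no technical difficulty beyond keeping the vertex bookkeeping straight and recording the standard fact that the coprime-order direct product of solvable groups realizes the direct product of their degree graphs in the sense defined above.
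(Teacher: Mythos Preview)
Your proof is correct and follows essentially the same approach as the paper: handle $k=1$ via Lemma~\ref{complete}, and for $k\geq2$ realize $\Gamma_{k,1}$ as the direct product of a singleton with the two-component graph supplied by Lemma~\ref{crazy} (with $N=k-1$). The only cosmetic difference is that you swap which factor is called $A$ and which is called $B$, and you spell out the coprime-order and edge-verification details that the paper leaves implicit.
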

\begin{proof}
Observe that $\Gamma_{1,1}$ occurs by Lemma \ref{complete}. For $k\geq2$, we consider the following: let $A$ be a graph consisting of two complete, connected components: one having an isolated singleton, and the other having $k-1$ vertices. Let $B$ be a singleton. Notice that $A$ occurs as $\Delta(G_A)$ for some solvable group $G_A$ by Lemma \ref{crazy}, and $B$ occurs as $\Delta(G_B)$ for some solvable group $G_B$ by Lemma \ref{complete}. Therefore, the direct product of $A$ and $B$ occurs as $\Delta(G)$ for some solvable group $G$. Finally, observe that $\Gamma_{k,1}$ is precisely the direct product of $A$ and $B$.
\end{proof}

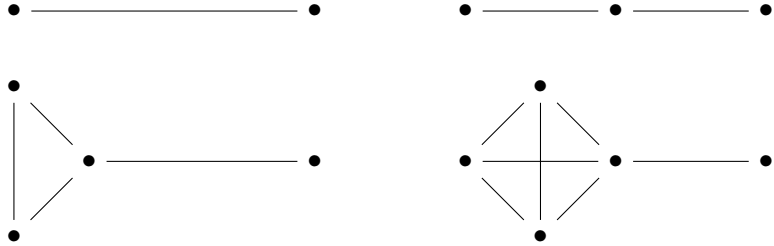
\begin{figure}[H]
    \centering
$
\begin{tikzpicture}[scale=2]
\node (1a) at (0,0) {$\bullet$};
\node (1b) at (2,0) {$\bullet$};
\path[font=\small,>=angle 90]
(1a) edge node [above] {$ $} (1b);
\node (2a) at (3,0) {$\bullet$};
\node (2b) at (4,0) {$\bullet$};
\node (2c) at (5,0) {$\bullet$};
\path[font=\small,>=angle 90]
(2a) edge node [right] {$ $} (2b)
(2b) edge node [above] {$ $} (2c);
\node (3a) at (.5,-1) {$\bullet$};
\node (3b) at (0,-1.5) {$\bullet$};
\node (3c) at (0,-.5) {$\bullet$};
\node (3d) at (2,-1) {$\bullet$};
\path[font=\small,>=angle 90]
(3a) edge node [right] {$ $} (3b)
(3a) edge node [above] {$ $} (3c)
(3b) edge node [above] {$ $} (3c)
(3a) edge node [above] {$ $} (3d);
\node (4a) at (3,-1) {$\bullet$};
\node (4b) at (3.5,-1.5) {$\bullet$};
\node (4c) at (3.5,-.5) {$\bullet$};
\node (4d) at (4,-1) {$\bullet$};
\node (4f) at (5,-1) {$\bullet$};
\path[font=\small,>=angle 90]
(4a) edge node [right] {$ $} (4b)
(4a) edge node [above] {$ $} (4c)
(4a) edge node [above] {$ $} (4d)
(4b) edge node [above] {$ $} (4c)
(4b) edge node [above] {$ $} (4d)
(4c) edge node [right] {$ $} (4d)
(4d) edge node [above] {$ $} (4f);
\end{tikzpicture}
$    
    \caption{Graphs in Proposition \ref{early}}
    \label{figk1}
\end{figure}

It was shown in \cite{H} that $\Gamma_{2,2}$, represented in Figure \ref{fig22}, occurs as $\Delta(G)$ for some solvable group $G$. It is in fact a direct product. The rest of the cases for $t=2$ can be extrapolated from Theorem \ref{markmain}.
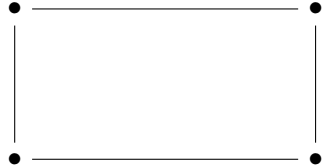
\begin{figure}[H]
    \centering
$
\begin{tikzpicture}[scale=2]
\node (a) at (0,0) {$\bullet$};
\node (b) at (0,1) {$\bullet$};
\node (c) at (2,0) {$\bullet$};
\node (d) at (2,1) {$\bullet$};
\path[font=\small,>=angle 90]
(a) edge node [above] {$ $} (b)
(c) edge node [above] {$ $} (d)
(a) edge node [above] {$ $} (c)
(b) edge node [above] {$ $} (d);
\end{tikzpicture}
$
    \caption{The Graph $\Gamma_{2,2}$}
    \label{fig22}
\end{figure}

\begin{proposition}\label{bissresult}\emph{(\cite{BL})}
Let $k\geq3$. The graph $\Gamma_{k,2}$ is not the prime character degree graph of any solvable group.
\end{proposition}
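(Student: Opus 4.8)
The plan is to recognize $\Gamma_{k,2}$ as a graph to which Theorem \ref{markmain} applies directly, so that essentially all of the work reduces to verifying the hypotheses of that theorem. The construction already guarantees that $\Gamma_{k,2}$ satisfies P\'alfy's condition and has $k+2$ vertices; since $k\geq3$, we have $k+2\geq5$, which meets the cardinality requirement of Theorem \ref{markmain}. So the only remaining task is to exhibit two vertices of degree two that are adjacent and share no common neighbor.

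The crucial observation is that taking $t=2$ forces the two vertices $b_1,b_2$ of the complete graph $B$ to each have degree exactly two. Indeed, because $B$ is complete on two vertices, $b_1$ and $b_2$ are adjacent; and because the matching attaches exactly one edge from each of $b_1,b_2$ into $A$ (namely $a_1b_1$ and $a_2b_2$), the neighborhood of $b_1$ is $\{b_2,a_1\}$ and the neighborhood of $b_2$ is $\{b_1,a_2\}$. I would record these three facts in turn: (i) both $b_1$ and $b_2$ have degree two; (ii) $b_1$ is adjacent to $b_2$; and (iii) since $k\geq3$ ensures that $A$ contains at least two distinct vertices $a_1\neq a_2$, the sets $\{a_1\}$ and $\{a_2\}$ are disjoint, so $b_1$ and $b_2$ have no common neighbor.

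With the vertices $p_1:=b_1$ and $p_2:=b_2$ satisfying all three conditions, I would invoke Theorem \ref{markmain} to conclude that $\Gamma_{k,2}$ is not the prime character degree graph of any solvable group. There is no genuine obstacle here beyond the bookkeeping: the entire content lies in matching the local configuration of $b_1$ and $b_2$ to the hypotheses of Theorem \ref{markmain}. The one point requiring a moment of care is the no-common-neighbor condition (iii), which is exactly where the hypothesis $k\geq3$ (equivalently, that the two matching edges land on distinct vertices of $A$) is used; without it, the argument would collapse, which also explains why the case $k=t=2$ must be handled separately and behaves differently.
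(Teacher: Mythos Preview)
Your proof is correct and follows exactly the paper's approach: both simply verify that $\Gamma_{k,2}$ satisfies the hypotheses of Theorem~\ref{markmain} with $p_1=b_1$ and $p_2=b_2$. One small inaccuracy in your closing commentary: the hypothesis $k\geq3$ is not actually what secures condition (iii)---even $k\geq2$ gives $a_1\neq a_2$, so $b_1$ and $b_2$ already share no common neighbor in $\Gamma_{2,2}$---but rather the vertex-count requirement $k+2\geq5$ of Theorem~\ref{markmain}; that is the clause that genuinely fails when $k=2$ and explains why $\Gamma_{2,2}$ escapes the theorem.
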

\begin{proof}
Observe that Theorem \ref{markmain} yields two families, one of which is $\Gamma_{k,2}$.
\end{proof}

\begin{figure}[H]
    \centering
$
\begin{tikzpicture}[scale=2]
\node (1a) at (0,.5) {$\bullet$};
\node (1b) at (.5,0) {$\bullet$};
\node (1c) at (.5,1) {$\bullet$};
\node (1d) at (2,0) {$\bullet$};
\node (1e) at (2,1) {$\bullet$};
\path[font=\small,>=angle 90]
(1a) edge node [right] {$ $} (1b)
(1a) edge node [above] {$ $} (1c)
(1b) edge node [above] {$ $} (1c)
(1d) edge node [above] {$ $} (1e)
(1b) edge node [above] {$ $} (1d)
(1c) edge node [above] {$ $} (1e);
\node (2a) at (3,.5) {$\bullet$};
\node (2b) at (3.5,0) {$\bullet$};
\node (2c) at (3.5,1) {$\bullet$};
\node (2d) at (4,.5) {$\bullet$};
\node (2f) at (5,0) {$\bullet$};
\node (2g) at (5,1) {$\bullet$};
\path[font=\small,>=angle 90]
(2a) edge node [right] {$ $} (2b)
(2a) edge node [above] {$ $} (2c)
(2a) edge node [above] {$ $} (2d)
(2b) edge node [above] {$ $} (2c)
(2b) edge node [above] {$ $} (2d)
(2c) edge node [right] {$ $} (2d)
(2f) edge node [above] {$ $} (2g)
(2b) edge node [above] {$ $} (2f)
(2c) edge node [above] {$ $} (2g);
\node (3i) at (0,-1.25) {$\bullet$};
\node (3a) at (0,-.75) {$\bullet$};
\node (3b) at (.5,-1.5) {$\bullet$};
\node (3c) at (1,-1) {$\bullet$};
\node (3d) at (.5,-.5) {$\bullet$};
\node (3e) at (2,-1.5) {$\bullet$};
\node (3g) at (2,-.5) {$\bullet$};
\path[font=\small,>=angle 90]
(3i) edge node [right] {$ $} (3a)
(3i) edge node [above] {$ $} (3b)
(3i) edge node [above] {$ $} (3c)
(3i) edge node [above] {$ $} (3d)
(3a) edge node [right] {$ $} (3b)
(3a) edge node [above] {$ $} (3c)
(3a) edge node [above] {$ $} (3d)
(3b) edge node [above] {$ $} (3c)
(3b) edge node [above] {$ $} (3d)
(3c) edge node [right] {$ $} (3d)
(3e) edge node [above] {$ $} (3g)
(3b) edge node [above] {$ $} (3e)
(3d) edge node [above] {$ $} (3g);
\node (4i) at (3.25,-1.5) {$\bullet$};
\node (4a) at (3.25,-.5) {$\bullet$};
\node (4b) at (3.75,-1.5) {$\bullet$};
\node (4c) at (3,-1) {$\bullet$};
\node (4h) at (4,-1) {$\bullet$};
\node (4d) at (3.75,-.5) {$\bullet$};
\node (4e) at (5,-1.5) {$\bullet$};
\node (4g) at (5,-.5) {$\bullet$};
\path[font=\small,>=angle 90]
(4h) edge node [right] {$ $} (4a)
(4h) edge node [above] {$ $} (4b)
(4h) edge node [above] {$ $} (4c)
(4h) edge node [above] {$ $} (4d)
(4h) edge node [right] {$ $} (4i)
(4i) edge node [right] {$ $} (4a)
(4i) edge node [above] {$ $} (4b)
(4i) edge node [above] {$ $} (4c)
(4i) edge node [above] {$ $} (4d)
(4a) edge node [right] {$ $} (4b)
(4a) edge node [above] {$ $} (4c)
(4a) edge node [above] {$ $} (4d)
(4b) edge node [above] {$ $} (4c)
(4b) edge node [above] {$ $} (4d)
(4c) edge node [right] {$ $} (4d)
(4e) edge node [above] {$ $} (4g)
(4b) edge node [above] {$ $} (4e)
(4d) edge node [above] {$ $} (4g);
\end{tikzpicture}
$
    \caption{Graphs in Proposition \ref{bissresult}}
    \label{figk2}
\end{figure}
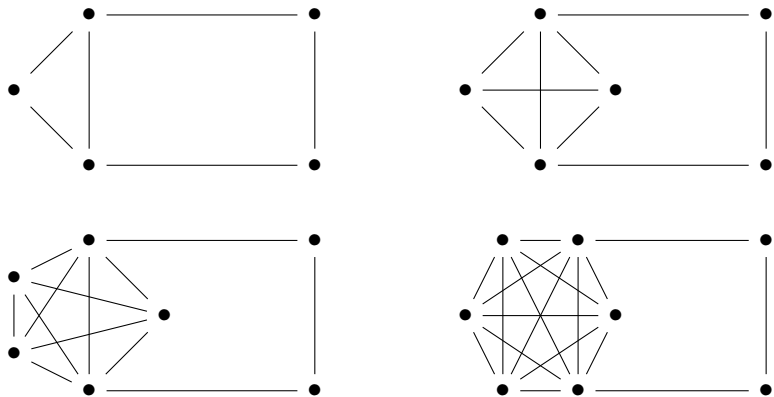

\begin{proposition}\label{main}
Let $k\geq3$. The graph $\Gamma_{k,3}$ is not the prime character degree graph of any solvable group.
\end{proposition}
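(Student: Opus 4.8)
The plan is to argue by induction on $k$, showing at each stage that every vertex of $\Gamma_{k,3}$ is admissible and then invoking Lemma \ref{mark}. It is convenient to sort the vertices into three kinds: the \emph{matched} vertices $a_1,a_2,a_3$ (each joined across to a vertex of $B$), the \emph{unmatched} vertices $a_4,\dots,a_k$ (present only for $k\geq4$), and the vertices $b_1,b_2,b_3$ of $B$. Since the construction is symmetric in the indices and under interchanging $A$ and $B$, it suffices to establish admissibility for one representative of each kind, say $a_3$, $a_4$, and $b_3$. For the base case $k=3$ the graph is two triangles joined by a perfect matching and is vertex-transitive, so a single vertex must be checked.

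For admissibility condition (i) (deletion of a vertex together with its incident edges) I would proceed as follows. Deleting $b_3$ returns exactly the graph $\Gamma_{k,2}$, which does not occur by Proposition \ref{bissresult}. Deleting the unmatched vertex $a_4$ returns $\Gamma_{k-1,3}$, which does not occur by the inductive hypothesis; this is the only place the induction is used. Deleting the matched vertex $a_3$ leaves a diameter-three graph when $k\geq4$: taking $p=a_4$ one finds $\rho_3=\{b_1,b_2\}$, so $|\rho_3|=2<3$, contradicting Proposition \ref{sassresult}(\ref{sass1}). In the base case there is no unmatched vertex, but then deletion produces a five-vertex graph in which the two surviving matched vertices are adjacent, of degree two, and share no common neighbor; as it satisfies P\'alfy's condition, Theorem \ref{markmain} rules it out.

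For condition (ii) I would split the edges at a vertex into the \emph{internal} edges (those lying within a complete factor) and the unique \emph{cross} edge $a_ib_i$. Removing any internal edge leaves a triple consisting of its two endpoints together with a suitably chosen vertex of the opposite complete factor that is matched to neither endpoint; such a vertex exists because $B$ has three vertices, and the resulting triple spans no edge, violating P\'alfy's condition by Lemma \ref{PC}. Removing instead the cross edge $a_ib_i$ makes the graph diameter three, and the decomposition based at $a_i$ yields a set $\rho_3$ consisting of the two vertices of $B$ other than $b_i$, so $|\rho_3|=2<3$, again contradicting Proposition \ref{sassresult}(\ref{sass1}). Every nonempty set of incident edges either contains an internal edge, and so is caught by the P\'alfy argument, or consists solely of the cross edge, and so is caught by the diameter-three argument; this disposes of condition (ii) in all cases.

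I expect the exhaustive verification of condition (ii) to be the main obstacle, since admissibility requires that \emph{every} subset of incident edges fail to occur, not merely the full deletion. The clean dichotomy above --- internal removals create an edgeless triple and thus fail P\'alfy, while the lone cross removal forces diameter three with $|\rho_3|=2$ --- is what keeps the case analysis finite and uniform. A secondary delicacy is that the vertex-deletion and cross-edge arguments rely on locating a vertex at distance three and checking that the resulting $\rho_3$ has size exactly two; this must be done with care for small $k$, and the base case $k=3$ genuinely requires the separate appeal to Theorem \ref{markmain}, since there the relevant deleted graph has diameter two rather than three.
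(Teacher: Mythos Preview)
Your proposal is correct and follows essentially the same approach as the paper: induct on $k$, verify every vertex is admissible, and apply Lemma~\ref{mark}, with internal edge removals handled by P\'alfy and cross-edge removals handled by Proposition~\ref{sassresult}\eqref{sass1}. Your treatment is in fact more explicit than the paper's, which leaves the inductive step as ``a similar argument''; the only cosmetic difference is that for the base-case vertex deletion the paper recognizes the resulting five-vertex graph as $\Gamma_{3,2}$ and cites Proposition~\ref{bissresult}, whereas you invoke Theorem~\ref{markmain} directly---these are equivalent.
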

\begin{proof}
We proceed by induction on $k$. As our base case, we will show that $\Gamma_{3,3}$ does not occur as the prime character degree graph of any solvable group. We label the vertices $a_i$ and $b_i$ for $1\leq i\leq3$, where the $a$'s and $b$'s each form a complete graph, with edges between $a_i$ and $b_i$. For the sake of contradiction, suppose that the solvable group $G$ is a counterexample with $|G|$ minimal such that $\Delta(G)=\Gamma_{3,3}$.

We will show that $a_1$ is admissible. Removing the edge between $a_1$ and $a_i$ (with $1\leq i\leq3$ and $i\neq1$) violates P\'alfy's condition with $a_1$, $a_i$, and $b_j$, where $1\leq j\leq3$, $j\neq1$, and $j\neq i$. Removing the edge between $a_1$ and $b_1$ reduces to a graph of diameter three. This graph cannot occur since $|\rho_3|=2$, which contradicts Proposition \ref{sassresult}\eqref{sass1}. Removing the vertex $a_1$ with all incident edges yields the graph $\Gamma_{3,2}$, which does not occur by Proposition \ref{bissresult}. Thus, $a_1$ is admissible. By symmetry, we note that every remaining vertex is also admissible. Hence, $\Gamma_{3,3}$ cannot occur by Lemma \ref{mark}.

As the inductive hypothesis, we suppose that $\Gamma_{k-1,3}$ does not occur as the prime character degree graph of any solvable group. We now proceed to the inductive step. In order to see that $\Gamma_{k,3}$ also does not occur as the prime character degree graph of any solvable group, one can again show that every vertex is admissible. This follows a similar argument as above, and once again relies on Proposition \ref{sassresult}\eqref{sass1}.
\end{proof}

\begin{figure}[H]
    \centering
$
\begin{tikzpicture}[scale=2]
\node (1a) at (0,1) {$\bullet$};
\node (1b) at (0,0) {$\bullet$};
\node (1c) at (.5,.5) {$\bullet$};
\node (1d) at (2,1) {$\bullet$};
\node (1e) at (2,0) {$\bullet$};
\node (1f) at (1.5,.5) {$\bullet$};
\path[font=\small,>=angle 90]
(1a) edge node [right] {$ $} (1b)
(1a) edge node [above] {$ $} (1c)
(1b) edge node [above] {$ $} (1c)
(1d) edge node [above] {$ $} (1e)
(1d) edge node [above] {$ $} (1f)
(1e) edge node [right] {$ $} (1f)
(1a) edge node [above] {$ $} (1d)
(1b) edge node [above] {$ $} (1e)
(1c) edge node [above] {$ $} (1f);
\node (2a) at (3,.5) {$\bullet$};
\node (2b) at (3.5,0) {$\bullet$};
\node (2c) at (3.5,1) {$\bullet$};
\node (2d) at (4,.5) {$\bullet$};
\node (2e) at (4.5,.5) {$\bullet$};
\node (2f) at (5,0) {$\bullet$};
\node (2g) at (5,1) {$\bullet$};
\path[font=\small,>=angle 90]
(2a) edge node [right] {$ $} (2b)
(2a) edge node [above] {$ $} (2c)
(2a) edge node [above] {$ $} (2d)
(2b) edge node [above] {$ $} (2c)
(2b) edge node [above] {$ $} (2d)
(2c) edge node [right] {$ $} (2d)
(2e) edge node [above] {$ $} (2f)
(2e) edge node [above] {$ $} (2g)
(2f) edge node [above] {$ $} (2g)
(2b) edge node [above] {$ $} (2f)
(2c) edge node [above] {$ $} (2g)
(2d) edge node [above] {$ $} (2e);
\node (3i) at (0,-.75) {$\bullet$};
\node (3a) at (0,-1.25) {$\bullet$};
\node (3b) at (.5,-1.5) {$\bullet$};
\node (3c) at (1,-1) {$\bullet$};
\node (3d) at (.5,-.5) {$\bullet$};
\node (3e) at (2,-1.5) {$\bullet$};
\node (3f) at (1.5,-1) {$\bullet$};
\node (3g) at (2,-.5) {$\bullet$};
\path[font=\small,>=angle 90]
(3i) edge node [right] {$ $} (3a)
(3i) edge node [above] {$ $} (3b)
(3i) edge node [above] {$ $} (3c)
(3i) edge node [above] {$ $} (3d)
(3a) edge node [right] {$ $} (3b)
(3a) edge node [above] {$ $} (3c)
(3a) edge node [above] {$ $} (3d)
(3b) edge node [above] {$ $} (3c)
(3b) edge node [above] {$ $} (3d)
(3c) edge node [right] {$ $} (3d)
(3e) edge node [above] {$ $} (3f)
(3e) edge node [above] {$ $} (3g)
(3f) edge node [right] {$ $} (3g)
(3b) edge node [above] {$ $} (3e)
(3c) edge node [above] {$ $} (3f)
(3d) edge node [above] {$ $} (3g);
\node (4a) at (3,-1) {$\bullet$};
\node (4b) at (3.25,-1.5) {$\bullet$};
\node (4c) at (3.25,-.5) {$\bullet$};
\node (4d) at (3.75,-.5) {$\bullet$};
\node (4e) at (3.75,-1.5) {$\bullet$};
\node (4f) at (4,-1) {$\bullet$};
\node (4g) at (5,-1.5) {$\bullet$};
\node (4h) at (4.5,-1) {$\bullet$};
\node (4i) at (5,-.5) {$\bullet$};
\path[font=\small,>=angle 90]
(4a) edge node [right] {$ $} (4b)
(4a) edge node [above] {$ $} (4c)
(4a) edge node [above] {$ $} (4d)
(4a) edge node [above] {$ $} (4e)
(4a) edge node [right] {$ $} (4f)
(4b) edge node [above] {$ $} (4c)
(4b) edge node [above] {$ $} (4d)
(4b) edge node [above] {$ $} (4e)
(4b) edge node [above] {$ $} (4f)
(4c) edge node [right] {$ $} (4d)
(4c) edge node [above] {$ $} (4e)
(4c) edge node [above] {$ $} (4f)
(4d) edge node [above] {$ $} (4e)
(4d) edge node [above] {$ $} (4f)
(4e) edge node [right] {$ $} (4f)
(4g) edge node [above] {$ $} (4h)
(4g) edge node [above] {$ $} (4i)
(4h) edge node [right] {$ $} (4i)
(4d) edge node [above] {$ $} (4i)
(4e) edge node [above] {$ $} (4g)
(4f) edge node [above] {$ $} (4h);
\end{tikzpicture}
$
    \caption{Graphs in Proposition \ref{main}}
    \label{figk3}
\end{figure}
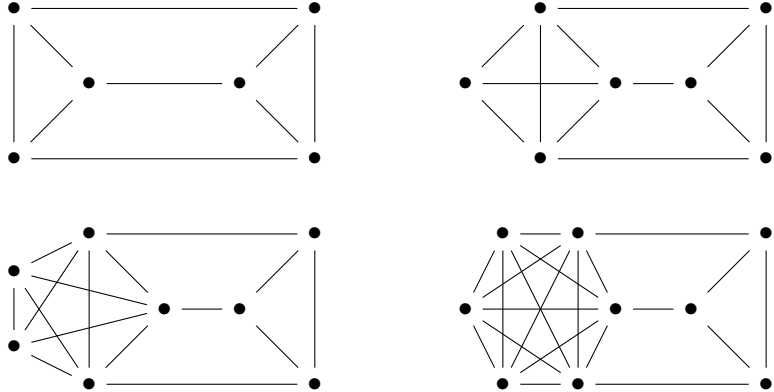

\begin{proposition}\label{huge}
Let $k\geq4$ and $t\geq4$. The graph $\Gamma_{k,t}$ is not the prime character degree graph of any solvable group.
\end{proposition}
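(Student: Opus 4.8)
The plan is to mirror the strategy of Proposition \ref{main}: show that \emph{every} vertex of $\Gamma_{k,t}$ is admissible and then invoke Lemma \ref{mark}. I would argue by strong induction on $k+t$, with $\Gamma_{4,4}$ as the base case; every reduction below terminates either at a smaller member of the family (handled by the inductive hypothesis) or at a graph with smaller index equal to $3$ (handled by Proposition \ref{main}). Since $k\geq t\geq 4$, the vertices fall into three symmetry classes, so it suffices to treat one representative of each: a vertex $a_1\in\rho(A)$ carrying a cross-edge, a vertex $a_{t+1}\in\rho(A)$ with no cross-edge (present only when $k>t$), and a vertex $b_1\in\rho(B)$.

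For the edge-removal part of admissibility I expect two uniform mechanisms. Deleting any edge inside one of the complete graphs, say $a_1a_j$, leaves the triple $\{a_1,a_j,b_m\}$ spanning no edge for a suitably chosen index $m\neq 1,j$, violating Lemma \ref{PC}; because deleting further incident edges can only remove edges, this independent triple survives, so the argument is robust to removing several incident edges simultaneously. The \emph{only} incident-edge deletion that does not break P\'alfy's condition is the removal of the single cross-edge $a_1b_1$. Here the graph acquires diameter three, and taking $p=a_1$, $q=b_1$ one finds $\rho_3=\{b_2,\dots,b_t\}$, $\rho_4=\{b_1\}$, and $\rho_1\cup\rho_2=\rho(A)$, so that $|\rho_1\cup\rho_2|=k$ and $|\rho_3\cup\rho_4|=t$. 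This violates Proposition \ref{sassresult}\eqref{sass2} when $k>t$, and when $k=t$ it violates Proposition \ref{sassresult}\eqref{sass3}, since $t<2^{t}$. For the straightforward vertex removals, deleting the non-matched vertex $a_{t+1}$ yields $\Gamma_{k-1,t}$ and deleting $b_1$ yields $\Gamma_{k,t-1}$, each of which fails to occur by the inductive hypothesis, or by Proposition \ref{main} once the smaller index reaches $3$.

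The step I expect to be the main obstacle is the vertex-removal condition for a cross-edge vertex $a_1$, because deleting it leaves two complete graphs of sizes $k-1$ and $t$ joined by only $t-1$ cross-edges, which is generally \emph{not} a member of the family $\Gamma_{k,t}$. The resolution is to split on whether $k=t$ or $k>t$. When $k=t$, all $t-1$ vertices of the smaller complete graph remain matched, so after relabeling the leftover graph is exactly $\Gamma_{t,t-1}$, which is dispatched by the inductive hypothesis (or by Proposition \ref{main} when $t=4$). When $k>t$, the leftover graph has diameter three; choosing $p=a_k$ (a non-matched vertex, which exists precisely because $k>t$) and $q=b_1$ gives $\rho_3=\{b_2,\dots,b_t\}$, $\rho_4=\{b_1\}$, and $\rho_1\cup\rho_2=\rho(A)\setminus\{a_1\}$, so $|\rho_1\cup\rho_2|=k-1$ and $|\rho_3\cup\rho_4|=t$; this contradicts Proposition \ref{sassresult}\eqref{sass2} when $k>t+1$, and Proposition \ref{sassresult}\eqref{sass3} when $k=t+1$ since $t<2^{t}$.

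Finally, the vertex $a_{t+1}$ has no incident cross-edge, so each of its edge deletions is an internal deletion handled by the P\'alfy argument above, and its single vertex deletion was treated above. Having verified admissibility for one representative in each of the three symmetry classes, symmetry promotes this to every vertex of $\Gamma_{k,t}$, and Lemma \ref{mark} then closes the inductive step. I expect the bookkeeping of which $\rho_i$ each vertex lands in (and confirming the leftover graphs really have diameter exactly three) to be the only delicate routine checks; the conceptual crux is the $k=t$ versus $k>t$ dichotomy in the removal of a matched $A$-vertex.
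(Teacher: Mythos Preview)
Your overall architecture matches the paper's: induct, show every vertex admissible via the three symmetry classes, use P\'alfy for internal edges, and reduce vertex removals to smaller members of the family. The vertex removals that land back in the family ($b_1\mapsto\Gamma_{k,t-1}$, $a_{t+1}\mapsto\Gamma_{k-1,t}$, and the matched $a_1$ when $k=t$ giving $\Gamma_{t,t-1}$) are all handled correctly, and your $k=t$ application of Proposition~\ref{sassresult}\eqref{sass3} for the cross-edge deletion agrees with the paper's $\Gamma_{4,4}$ argument.

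The gap is in your use of Proposition~\ref{sassresult}\eqref{sass2} when $k>t$. You fix $p=a_1$ and compute $|\rho_1\cup\rho_2|=k$, $|\rho_3\cup\rho_4|=t$, then claim \eqref{sass2} is violated. But the partition in Sass's theorem is not yours to choose: the result asserts that \emph{some} orientation satisfies \eqref{sass2}--\eqref{sass4}, and \eqref{sass2} is precisely what pins down which side is $\rho_3\cup\rho_4$. (This is why the paper writes that \eqref{sass2} ``forces $\rho_3=\{a_2,a_3,a_4\}$''.) With the correct orientation one gets $|\rho_1\cup\rho_2|=t$ and $|\rho_3\cup\rho_4|=k$, so \eqref{sass2} is satisfied, and \eqref{sass3} only yields a contradiction when $k<2^{t}$. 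For $k\ge 2^{t}$ (e.g.\ $k=20$, $t=4$) neither \eqref{sass2} nor \eqref{sass3} rules out the diameter-three subgraph, and the same issue recurs in your matched-vertex removal when $k>t$.

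The paper closes this gap with a different mechanism: once \eqref{sass2} forces $\rho_3$ to sit on the $A$-side, invoke \eqref{sass4} to obtain a normal Sylow $p$-subgroup for some $p\in\rho_3$, pass to $H/P'$ via Lemma~\ref{lewisgraph} to delete that vertex, and observe that the resulting diameter-three graph has $|\rho_3|$ reduced by one. Iterating drives $|\rho_3|$ below $3$, contradicting \eqref{sass1}. You will need this Sylow-plus-quotient step (possibly applied several times when $t>4$) in place of your direct appeal to \eqref{sass2}; the rest of your outline then goes through.
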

\begin{proof}
We proceed by induction on both $k$ and $t$. For our base case, we will show that $\Gamma_{k,4}$ does not occur as the prime character degree graph of any solvable group. To do this, one must first consider $\Gamma_{4,4}$ and then induct on $k$. To see that $\Gamma_{4,4}$ does not occur, we start by labeling the vertices $a_i$ and $b_i$ for $1\leq i\leq4$, where the $a$'s and $b$'s each form a complete graph, with edges between $a_i$ and $b_i$. For the sake of contradiction, suppose that the solvable group $G$ is a counterexample with $|G|$ minimal such that $\Delta(G)=\Gamma_{4,4}$.

We will show that $a_1$ is admissible. Removing the edge between $a_1$ and $a_i$ (with $1\leq i\leq 4$ and $i\neq1$) violates P\'alfy's condition with $a_1$, $a_i$, and $b_j$, where $1\leq j\leq4$, $j\neq1$, and $j\neq i$. Removing the edge between between $a_1$ and $b_1$ yields a graph of diameter three. Notice that $|\rho_1\cup\rho_2|=4$ and $|\rho_3\cup\rho_4|=4$, which contradicts Proposition \ref{sassresult}\eqref{sass3}. Removing the vertex $a_1$ and all incident edges yields the graph $\Gamma_{4,3}$, which does not occur by Proposition \ref{main}. Thus $a_1$ is admissible. By symmetry, notice that every remaining vertex is also admissible. Hence $\Gamma_{4,4}$ cannot occur by Lemma \ref{mark}.

As the inductive hypothesis, we assume that $\Gamma_{k-1,4}$ does not occur as the prime character degree graph of any solvable group. To see that $\Gamma_{k,4}$ does not occur as the prime character degree graph of any solvable group, we start by labeling the vertices $a_i$ and $b_j$ for $1\leq i\leq k$ and $1\leq j\leq4$, where the $a$'s and $b$'s each form a complete graph, with edges between $a_j$ and $b_j$. For the sake of contradiction, suppose that the solvable group $G$ is a counterexample with $|G|$ minimal such that $\Delta(G)=\Gamma_{k,4}$.

First we show that $a_1$ is admissible. Removing the edge between $a_1$ and $a_i$ (where $1\leq i\leq k$ and $i\neq1$) violates P\'alfy's condition with $a_1$, $a_i$, and $b_j$, where $1\leq j\leq4$, $j\neq1$, and $j\neq i$. Removing the edge between $a_1$ and $b_1$ yields a graph of diameter three (which we denote by $\Delta(H)$ for some solvable group $H$). Notice that Proposition \ref{sassresult}\eqref{sass2} forces $\rho_3=\{a_2,a_3,a_4\}$, and by Proposition \ref{sassresult}\eqref{sass4}, we know that $H$ has a normal Sylow $p$-subgroup for exactly one prime $p\in\rho_3$. Without loss of generality, let $p=a_2$, and call the normal subgroup $P$. By Lemma \ref{lewisgraph}, we know that $\rho(H/P')=\rho(H)\setminus\{p\}$. Observe that $\Delta(H/P')$ is a connected subgraph obtained from $\Delta(H)$ by removing the vertex $a_2$ and all incident edges, and possibly edges between vertices adjacent to $a_2$. However, in $\Delta(H/P')$, we have that $|\rho_3|=2$, which contradicts Proposition \ref{sassresult}\eqref{sass1}. Thus the edge between $a_1$ and $b_1$ cannot be lost. Removing the vertex $a_1$ and all incident edges yields a graph of diameter three. By following an argument similar to the above, we can again get that $|\rho_3|=2$, which cannot happen. Thus $a_1$ is admissible. By symmetry, $a_2$, $a_3$, and $a_4$ are admissible as well.

Following similar arguments, one can show that $b_j$ is admissible for $1\leq j\leq4$, as well as $a_i$ for $5\leq i\leq k$. Thus, all vertices are admissible, and we get our result from Lemma \ref{mark}.

Finally we verify that $\Gamma_{k,t}$ does not occur. The inductive hypothesis requires that $\Gamma_{k,t-1}$ does not occur as the prime character degree graph of any solvable group. We then need to show $\Gamma_{k,t}$ does not occur, which requires induction. First we will verify that $\Gamma_{t,t}$ does not occur by following a similar argument as above, mimicking the proof for $\Gamma_{4,4}$. Then we suppose that $\Gamma_{k-1,t}$ does not occur as the prime character degree graph of any solvable group. Showing $\Gamma_{k,t}$ does not occur employs the same argument as for $\Gamma_{k,4}$, using Proposition \ref{sassresult}\eqref{sass4} and Lemma \ref{lewisgraph} multiple times before getting a contradiction with Proposition \ref{sassresult}\eqref{sass1}.
\end{proof}

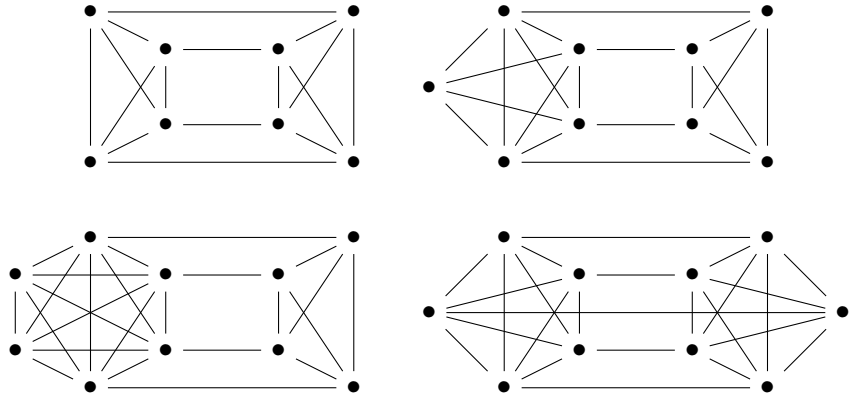
\begin{figure}[H]
    \centering
$
\begin{tikzpicture}[scale=2]
\node (1a) at (0,1) {$\bullet$};
\node (1b) at (0,0) {$\bullet$};
\node (1c) at (.5,.75) {$\bullet$};
\node (1d) at (.5,.25) {$\bullet$};
\node (1e) at (1.25,.75) {$\bullet$};
\node (1f) at (1.25,.25) {$\bullet$};
\node (1g) at (1.75,1) {$\bullet$};
\node (1h) at (1.75,0) {$\bullet$};
\path[font=\small,>=angle 90]
(1a) edge node [right] {$ $} (1b)
(1a) edge node [above] {$ $} (1c)
(1a) edge node [above] {$ $} (1d)
(1b) edge node [above] {$ $} (1c)
(1b) edge node [above] {$ $} (1d)
(1c) edge node [right] {$ $} (1d)
(1e) edge node [above] {$ $} (1f)
(1e) edge node [above] {$ $} (1g)
(1e) edge node [above] {$ $} (1h)
(1f) edge node [above] {$ $} (1g)
(1f) edge node [above] {$ $} (1h)
(1g) edge node [above] {$ $} (1h)
(1a) edge node [above] {$ $} (1g)
(1b) edge node [above] {$ $} (1h)
(1c) edge node [above] {$ $} (1e)
(1d) edge node [above] {$ $} (1f);
\node (2i) at (2.25,.5) {$\bullet$};
\node (2a) at (2.75,1) {$\bullet$};
\node (2b) at (2.75,0) {$\bullet$};
\node (2c) at (3.25,.75) {$\bullet$};
\node (2d) at (3.25,.25) {$\bullet$};
\node (2e) at (4,.75) {$\bullet$};
\node (2f) at (4,.25) {$\bullet$};
\node (2g) at (4.5,1) {$\bullet$};
\node (2h) at (4.5,0) {$\bullet$};
\path[font=\small,>=angle 90]
(2a) edge node [right] {$ $} (2b)
(2a) edge node [above] {$ $} (2c)
(2a) edge node [above] {$ $} (2d)
(2b) edge node [above] {$ $} (2c)
(2b) edge node [above] {$ $} (2d)
(2c) edge node [right] {$ $} (2d)
(2e) edge node [above] {$ $} (2f)
(2e) edge node [above] {$ $} (2g)
(2e) edge node [above] {$ $} (2h)
(2f) edge node [above] {$ $} (2g)
(2f) edge node [above] {$ $} (2h)
(2g) edge node [above] {$ $} (2h)
(2a) edge node [above] {$ $} (2g)
(2b) edge node [above] {$ $} (2h)
(2c) edge node [above] {$ $} (2e)
(2d) edge node [above] {$ $} (2f)
(2i) edge node [above] {$ $} (2a)
(2i) edge node [above] {$ $} (2b)
(2i) edge node [above] {$ $} (2c)
(2i) edge node [above] {$ $} (2d);
\node (3i) at (-.5,-.75) {$\bullet$};
\node (3j) at (-.5,-1.25) {$\bullet$};
\node (3a) at (0,-.5) {$\bullet$};
\node (3b) at (0,-1.5) {$\bullet$};
\node (3c) at (.5,-.75) {$\bullet$};
\node (3d) at (.5,-1.25) {$\bullet$};
\node (3e) at (1.25,-.75) {$\bullet$};
\node (3f) at (1.25,-1.25) {$\bullet$};
\node (3g) at (1.75,-.5) {$\bullet$};
\node (3h) at (1.75,-1.5) {$\bullet$};
\path[font=\small,>=angle 90]
(3i) edge node [right] {$ $} (3j)
(3i) edge node [right] {$ $} (3a)
(3i) edge node [right] {$ $} (3b)
(3i) edge node [right] {$ $} (3c)
(3i) edge node [right] {$ $} (3d)
(3j) edge node [above] {$ $} (3a)
(3j) edge node [above] {$ $} (3b)
(3j) edge node [above] {$ $} (3c)
(3j) edge node [above] {$ $} (3d)
(3a) edge node [right] {$ $} (3b)
(3a) edge node [above] {$ $} (3c)
(3a) edge node [above] {$ $} (3d)
(3b) edge node [above] {$ $} (3c)
(3b) edge node [above] {$ $} (3d)
(3c) edge node [right] {$ $} (3d)
(3e) edge node [above] {$ $} (3f)
(3e) edge node [above] {$ $} (3g)
(3e) edge node [above] {$ $} (3h)
(3f) edge node [above] {$ $} (3g)
(3f) edge node [above] {$ $} (3h)
(3g) edge node [above] {$ $} (3h)
(3a) edge node [above] {$ $} (3g)
(3b) edge node [above] {$ $} (3h)
(3c) edge node [above] {$ $} (3e)
(3d) edge node [above] {$ $} (3f);
\node (4i) at (2.25,-1) {$\bullet$};
\node (4j) at (5,-1) {$\bullet$};
\node (4a) at (2.75,-.5) {$\bullet$};
\node (4b) at (2.75,-1.5) {$\bullet$};
\node (4c) at (3.25,-.75) {$\bullet$};
\node (4d) at (3.25,-1.25) {$\bullet$};
\node (4e) at (4,-.75) {$\bullet$};
\node (4f) at (4,-1.25) {$\bullet$};
\node (4g) at (4.5,-.5) {$\bullet$};
\node (4h) at (4.5,-1.5) {$\bullet$};
\path[font=\small,>=angle 90]
(4a) edge node [right] {$ $} (4b)
(4a) edge node [above] {$ $} (4c)
(4a) edge node [above] {$ $} (4d)
(4b) edge node [above] {$ $} (4c)
(4b) edge node [above] {$ $} (4d)
(4c) edge node [right] {$ $} (4d)
(4e) edge node [above] {$ $} (4f)
(4e) edge node [above] {$ $} (4g)
(4e) edge node [above] {$ $} (4h)
(4f) edge node [above] {$ $} (4g)
(4f) edge node [above] {$ $} (4h)
(4g) edge node [above] {$ $} (4h)
(4a) edge node [above] {$ $} (4g)
(4b) edge node [above] {$ $} (4h)
(4c) edge node [above] {$ $} (4e)
(4d) edge node [above] {$ $} (4f)
(4i) edge node [above] {$ $} (4a)
(4i) edge node [above] {$ $} (4b)
(4i) edge node [above] {$ $} (4c)
(4i) edge node [above] {$ $} (4d)
(4j) edge node [above] {$ $} (4e)
(4j) edge node [above] {$ $} (4f)
(4j) edge node [above] {$ $} (4g)
(4j) edge node [above] {$ $} (4h)
(4i) edge node [above] {$ $} (4j);
\end{tikzpicture}
$
    \caption{Graphs in Proposition \ref{huge}}
    \label{figk4}
\end{figure}

As consequence of Propositions \ref{early}, \ref{bissresult}, \ref{main}, and \ref{huge}, our Main Theorem is proved. We state it again below:

\begin{theorem}
The graph $\Gamma_{k,t}$ occurs as the prime character degree graph of a solvable group precisely when $k=1$, or $t=1$, or $k=t=2$.
\end{theorem}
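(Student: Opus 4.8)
The plan is to assemble the Main Theorem directly from the four propositions already established, exploiting the symmetry $\Gamma_{k,t}=\Gamma_{t,k}$ to normalize to the case $k\geq t$. First I would prove the forward implication: if $k=1$, $t=1$, or $k=t=2$, then $\Gamma_{k,t}$ occurs as the prime character degree graph of some solvable group. By symmetry the conditions $k=1$ and $t=1$ are interchangeable, and Proposition \ref{early} handles $\Gamma_{k,1}$ for every $k\geq1$. The remaining case $k=t=2$ is the graph $\Gamma_{2,2}$, which occurs by the result of \cite{H} recorded just before Proposition \ref{bissresult}. This settles one direction.

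For the converse I would show that whenever none of the three listed conditions holds, $\Gamma_{k,t}$ fails to occur. Assuming $k\geq t$ without loss of generality, the hypotheses $k\neq1$, $t\neq1$, and $(k,t)\neq(2,2)$ force $t\geq2$, and moreover $k\geq3$ in the subcase $t=2$. I would then split on the value of $t$: when $t=2$ and $k\geq3$, non-occurrence is Proposition \ref{bissresult}; when $t=3$ (so $k\geq3$ automatically), it is Proposition \ref{main}; and when $t\geq4$ (so $k\geq t\geq4$), it is Proposition \ref{huge}. These three subcases are mutually exclusive and exhaust precisely those parameter pairs not covered by the forward direction, so the converse follows. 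Combining the two implications yields exactly the stated characterization.

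The only subtlety here—hardly an obstacle, since all the substantive analytic work resides inside the propositions—is the bookkeeping of boundary cases under the normalization $k\geq t$. In particular one must verify that $k=t=2$ is genuinely the sole survivor among pairs with smaller coordinate equal to $2$: every $\Gamma_{k,2}$ with $k\geq3$ is excluded by Proposition \ref{bissresult}, while $\Gamma_{2,2}$ is included, and the relabeling $\Gamma_{2,k}=\Gamma_{k,2}$ guarantees that the symmetric copies introduce no occurring graphs that the case partition would otherwise miss. Once the partition is checked to be both exhaustive over all $(k,t)$ and consistent with the symmetry of the family, the theorem is immediate.
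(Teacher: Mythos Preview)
Your proposal is correct and matches the paper's own argument: the theorem is assembled directly from Propositions~\ref{early}, \ref{bissresult}, \ref{main}, and \ref{huge} together with the occurrence of $\Gamma_{2,2}$ from \cite{H}, after normalizing via $\Gamma_{k,t}=\Gamma_{t,k}$ to $k\geq t$. The paper states this in a single sentence, while you have spelled out the case partition explicitly, but the content is identical.
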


\begin{corollary}
Let $k\geq2$ and $t\geq2$. No connected proper subgraph of $\Gamma_{k,t}$ with the same vertex set is the prime character degree graph of any solvable group.
\end{corollary}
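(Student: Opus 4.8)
The plan is to argue by contradiction: suppose that for some $k\ge t\ge 2$ a connected spanning subgraph $\Gamma'\subsetneq\Gamma_{k,t}$ is realized as $\Delta(G)$, and let $S$ denote the nonempty set of deleted edges. I would classify the edges of $\Gamma_{k,t}$ into three types — the interior edges of the clique $A$, the interior edges of the clique $B$, and the $t$ matching edges $a_ib_i$ — and treat $S$ according to which types it meets. The guiding principle is that deleting edges can only destroy edges, so any independent triple witnessing a failure of P\'alfy's condition (Lemma \ref{PC}) in $\Gamma_{k,t}\setminus\{e\}$ survives in the smaller graph $\Gamma'$.

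First I would dispose of interior edges. If $S$ contains an edge $a_ia_j$ of $A$, then for any index $l\in\{1,\dots,t\}\setminus\{i,j\}$ the triple $\{a_i,a_j,b_l\}$ is independent (the only matching edge at $b_l$ is $a_lb_l$), so P\'alfy's condition fails unless no such $l$ exists; this forces $\{1,\dots,t\}\subseteq\{i,j\}$, hence $t=2$ and $\{i,j\}=\{1,2\}$. By the symmetry $\Gamma_{k,t}=\Gamma_{t,k}$, deleting an interior edge of $B$ likewise violates P\'alfy unless $k=2$ and the edge is $b_1b_2$. Thus, outside the small exceptions with $t=2$, the set $S$ may consist only of matching edges.

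Second I would treat the case where $S$ consists solely of matching edges $\{a_ib_i:i\in I\}$ with $\emptyset\neq I\subsetneq\{1,\dots,t\}$ (connectivity forbids deleting all $t$ of them). Here $\Gamma'$ has diameter exactly three: fixing $i_0\in I$, the vertices $a_{i_0}$ and $b_{i_0}$ are non-adjacent with no common neighbour, yet are joined by the path $a_{i_0}\,a_m\,b_m\,b_{i_0}$ for any $m\in\{1,\dots,t\}\setminus I$, so they lie at distance three, while the ball computation shows every pair is within distance three. Running the Sass partition from $p=a_{i_0}$, the radius-one ball about $p$ is exactly $A$ and all of $B$ lies at distance two or three, so $\rho_1\cup\rho_2=A$ and $\rho_3\cup\rho_4=B$. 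Then $|\rho_3\cup\rho_4|=t\le k<2^{k}=2^{|\rho_1\cup\rho_2|}$ contradicts Proposition \ref{sassresult}\eqref{sass3} (and when $t=2$ one already has $|\rho_3|=1<3$, contradicting Proposition \ref{sassresult}\eqref{sass1}).

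The remaining, and most delicate, cases are the exceptional interior deletions forced by $t=2$. When $k=t=2$ the graph $\Gamma_{2,2}$ is a $4$-cycle, whose only connected spanning subgraphs delete a single edge and yield a $P_4$ of diameter three with $|\rho_3|=1$, again contradicting Proposition \ref{sassresult}\eqref{sass1}. When $t=2$ and $k\ge 3$ the only surviving possibility is $S\subseteq\{a_1a_2,a_1b_1,a_2b_2\}$: if $S$ contains $a_1a_2$ together with a matching edge then $\{a_1,a_2,b_1\}$ or $\{a_1,a_2,b_2\}$ is independent and P\'alfy fails, and if $S$ contains only matching edges then the previous paragraph applies. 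This leaves the single case $S=\{a_1a_2\}$, which I expect to be the main obstacle: here $\Gamma'$ has diameter two, so the diameter-three machinery of Proposition \ref{sassresult} is unavailable. I would instead note that $\Gamma'$ has at least $5$ vertices and that $b_1,b_2$ are adjacent degree-two vertices sharing no common neighbour, whence Theorem \ref{markmain} applies and completes the proof.
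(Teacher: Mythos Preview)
Your argument is correct. The paper states this corollary without proof, so there is nothing to compare against directly; your case analysis is the natural way to fill in the details using the tools already assembled in the paper.

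A few remarks confirming the key steps. The reduction via P\'alfy's condition to the situation where $S$ contains only matching edges (or the single exceptional interior edge $a_1a_2$ when $t=2$) is clean and exploits exactly the monotonicity you flag at the outset: an independent triple in $\Gamma_{k,t}\setminus\{e\}$ remains independent in any further subgraph. Your treatment of the matching-edge case, computing the Sass partition from $p=a_{i_0}$ so that $\rho_1\cup\rho_2=A$ and $\rho_3\cup\rho_4=B$, and then invoking Proposition~\ref{sassresult}\eqref{sass3} to obtain $t\ge 2^k$ against $t\le k$, is the right move and parallels the paper's own handling of $\Gamma_{4,4}$ in the proof of Proposition~\ref{huge}. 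The residual diameter-two case $S=\{a_1a_2\}$ with $t=2$ and $k\ge 3$ is indeed the one place where the diameter-three machinery is unavailable, and your appeal to Theorem~\ref{markmain} via the pair $b_1,b_2$ closes it; it is worth recording explicitly (as you use implicitly) that this $\Gamma'$ still satisfies P\'alfy's condition, since every triple containing both $a_1$ and $a_2$ retains an edge through $a_j$ for some $j\ge 3$ or through the surviving matching edges, and that hypothesis is required in Theorem~\ref{markmain}.
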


The case for the disconnected subgraph can also be considered. The group structure of two connected components is fully outlined in \cite{L2}.

\end{document}